\documentclass[a4paper,12pt,reqno]{amsart}
\usepackage{amssymb}
\usepackage{amsmath}
\usepackage{amsthm}
\usepackage{amsfonts}

\newtheorem{theorem}{Theorem}[section]
\newtheorem{lemma}[theorem]{Lemma}

\theoremstyle{definition}
\newtheorem{definition}[theorem]{Definition}

\theoremstyle{remark}

\numberwithin{equation}{section}



\begin{document}

\title[Orthonormality]{Orthonormality of wavelet system on the Heisenberg group and twisted wavelet system on $\mathbb{C}$}

\author{S.Arati}
\author{R.Radha*}
\address{Department of Mathematics, Indian Institute of Technology Madras, Chennai 600 036, India}
\email{aratishashi@gmail.com ; radharam@iitm.ac.in}
\thanks{* Corresponding author}

\subjclass[2010]{Primary 42C40; Secondary 43A30, 42B10}

\keywords{Heisenberg group, nonisotropic dilation, twisted translation, wavelets, Weyl transform}

\begin{abstract}
The aim of this paper is to obtain necessary and sufficient conditions for the orthonormality of wavelet system arising out of left translations and nonisotropic dilations on the Heisenberg group $\mathbb{H}$. A similar problem is also discussed for the twisted wavelet system on $\mathbb{C}$.
\end{abstract}

\maketitle
\section{Introduction and background}
Let $\psi\in L^2(\mathbb{R})$. Define $\psi_{j,k}$ as $\psi_{j,k}(x)=2^{j/2}\psi(2^jx-k)$, for $j,k\in\mathbb{Z}$ and $x\in\mathbb{R}$. In other words, $\psi_{j,k}=D_{2^j}T_k\psi$, where $T_u,u\in\mathbb{R}$ denotes the translation operator $T_uf(x)=f(x-u)$ and $D_a,a\in\mathbb{R}^{\ast}$ denotes the dilation operator $D_af(x)=|a|^{1/2}f(ax),x\in\mathbb{R}$. This system $\{\psi_{j,k}:j,k\in\mathbb{Z}\}$ is in general called a wavelet system (cf \cite{Chris}). The following well known result gives a characterization of the orthonormality of the wavelet system.
\begin{theorem}
Let $\psi\in L^2(\mathbb{R})$. The necessary and sufficient conditions for the orthonormality of the system $\{\psi_{j,k}:j,k\in\mathbb{Z}\}$ are
\begin{align*}
&\sum_{k\in\mathbb{Z}}|\hat{\psi}(\xi+k)|^{2}=1\quad \text{for a.e.}\,\xi\in\mathbb{R}\\
\text{and}\quad&\sum_{k\in\mathbb{Z}}\hat{\psi}(\xi+k)\overline{\hat{\psi}(2^j(\xi+k))}=0\quad \text{for a.e.}\,\xi\in\mathbb{R},\,j\geq1.
\end{align*}
\end{theorem}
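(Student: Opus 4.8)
The plan is to pass to the Fourier transform side, where translation becomes modulation and dilation rescales the variable, and then to periodize. First I would record the basic facts: writing $\hat f(\xi)=\int_{\mathbb R}f(x)e^{-2\pi i x\xi}\,dx$, one has $\widehat{T_uf}(\xi)=e^{-2\pi i u\xi}\hat f(\xi)$ and $\widehat{D_af}(\xi)=|a|^{-1/2}\hat f(\xi/a)$, and each $D_a$ is unitary with $D_aD_b=D_{ab}$ and $D_a^{*}=D_{a^{-1}}$. Using unitarity of $D_{2^j}$, I would reduce the general inner product to $\langle\psi_{j,k},\psi_{j',k'}\rangle=\langle T_k\psi,\,D_{2^{m}}T_{k'}\psi\rangle$ with $m=j'-j$, so that orthonormality is equivalent to three families: $\langle T_k\psi,T_{k'}\psi\rangle=\delta_{k,k'}$ (the case $m=0$); $\langle T_k\psi,D_{2^{m}}T_{k'}\psi\rangle=0$ for all $k,k'$ and all $m\ge 1$; and the analogous vanishing for $m\le -1$. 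The last family is the complex conjugate of the second, since $\langle T_k\psi,D_{2^m}T_{k'}\psi\rangle=\overline{\langle T_{k'}\psi,D_{2^{-m}}T_{k}\psi\rangle}$ by unitarity, so only $m\ge1$ need be treated; this is precisely why the statement restricts to $j\ge1$.

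For $m=0$ I would apply Plancherel and periodize. Setting $\Phi(\xi)=\sum_{n\in\mathbb Z}|\hat\psi(\xi+n)|^{2}$, which lies in $L^1(\mathbb T)$ because $\int_0^1\Phi=\|\psi\|_2^2$, Tonelli's theorem gives $\langle T_k\psi,T_{k'}\psi\rangle=\int_0^1 e^{-2\pi i(k-k')\xi}\Phi(\xi)\,d\xi$, the $(k-k')$-th Fourier coefficient of $\Phi$. Thus $\langle T_k\psi,T_{k'}\psi\rangle=\delta_{k,k'}$ for all $k,k'$ if and only if every Fourier coefficient of $\Phi$ agrees with that of the constant $1$, i.e. $\Phi\equiv1$ a.e. by the uniqueness theorem for Fourier series. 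This is the first condition.

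For $m=j\ge1$ I would again use Plancherel to obtain $\langle T_k\psi,D_{2^{j}}T_{k'}\psi\rangle=2^{-j/2}\int_{\mathbb R}e^{-2\pi i(k-2^{-j}k')\xi}\hat\psi(\xi)\overline{\hat\psi(2^{-j}\xi)}\,d\xi$, and then substitute $\xi=2^{j}\eta$ to clear the fractional frequency, arriving after complex conjugation at a constant multiple of $\int_{\mathbb R}e^{2\pi i(k2^{j}-k')\eta}\hat\psi(\eta)\overline{\hat\psi(2^{j}\eta)}\,d\eta$. Periodizing and writing $G(\eta)=\sum_{l\in\mathbb Z}\hat\psi(\eta+l)\overline{\hat\psi(2^{j}(\eta+l))}$, this integral is the $(k2^{j}-k')$-th Fourier coefficient of $G$. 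The key observation is that as $k,k'$ range over $\mathbb Z$ the integers $k2^{j}-k'$ exhaust all of $\mathbb Z$, so the whole family vanishes if and only if every Fourier coefficient of $G$ is zero, i.e. $G\equiv0$ a.e., which is the second condition.

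The step I expect to require the most care is the bookkeeping on the Fourier side for the cross terms: getting the direction of the dilation right (so that $2^{j}$, rather than $2^{-j}$, sits inside $\hat\psi$) via the substitution $\xi=2^{j}\eta$, and justifying that $G\in L^1(\mathbb T)$ so that periodization and the uniqueness theorem apply. For the latter I would bound $\sum_l|\hat\psi(\eta+l)|\,|\hat\psi(2^j(\eta+l))|$ by $\Phi(\eta)^{1/2}\,\Phi(2^j\eta)^{1/2}$ using Cauchy--Schwarz together with $\sum_l|\hat\psi(2^j\eta+2^jl)|^2\le\Phi(2^j\eta)$, and then note $\int_0^1\Phi(2^j\eta)\,d\eta=\int_0^1\Phi$ by $1$-periodicity of $\Phi$, so a final Cauchy--Schwarz in $L^1(\mathbb T)$ gives finiteness. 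The remaining identification $\{k2^j-k':k,k'\in\mathbb Z\}=\mathbb Z$ is elementary.
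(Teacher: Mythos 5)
Your proof is correct. The paper itself offers no proof of this theorem --- it simply refers the reader to Hern\'andez--Weiss --- and your argument (reduction via unitarity of $D_{2^j}$ to the families $m=0$ and $m\ge 1$, Plancherel, periodization of $\Phi(\xi)=\sum_k|\hat{\psi}(\xi+k)|^2$ and $G(\xi)=\sum_k\hat{\psi}(\xi+k)\overline{\hat{\psi}(2^j(\xi+k))}$, identification of the inner products with Fourier coefficients, the observation that $\{k2^j-k'\}=\mathbb{Z}$, and the Cauchy--Schwarz estimate showing $G\in L^1(\mathbb{T})$ so the uniqueness theorem applies) is precisely the standard periodization proof given in that reference, with all the delicate points (the $m\le -1$ case by conjugate symmetry, integrability of the periodizations) handled correctly.
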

\noindent
For the proof, we refer to \cite{HW}.
\par In this note, we wish to consider a similar problem for the wavelet system on the Heisenberg group $\mathbb{H}$. This wavelet system emerges from the left translations $L_{(k,l,m)},(k,l,m)\in\mathbb{Z}^3$ and the nonisotropic dilations $\delta_{2^j},j\in\mathbb{Z}$ which are defined as follows. For $\psi\in L^2(\mathbb{H})$,
\begin{align*}
L_{(u,v,s)}\psi(x,y,t)&=\psi((u,v,s)^{-1}(x,y,t))\\
&=\psi\left(x-u,y-v,t-s+\frac{1}{2}(y\cdot u-x\cdot v)\right),\\
(u,v,s)\in\mathbb{R}^3\text{ and }
\delta_a&\psi(x,y,t)=|a|^2\psi(ax,ay,a^2t), \, a\in\mathbb{R}^\ast.
\end{align*}
The main result is considered in Theorem \ref{T:orthoHei}. While looking into this problem, we came across the wavelet system on $\mathbb{C}$ which appears due to twisted translations and dilations on $\mathbb{C}$. We call this system on $\mathbb{C}$ as twisted wavelet system. When we tried to study the necessary and sufficient condition for the orthonormality of twisted wavelet system it turned out to be a surprising fact that the required conditions became too complicated unlike the classical result on $\mathbb{R}$ or the result on the Heisenberg group. In fact, there are five necessary and sufficient conditions which lead to the orthonormality of the twisted wavelet system. This result is stated in Theorem \ref{T:orthoTwis}.
\par
Now, we shall mention a few works based on system of translates in various settings available in the literature. Characterizations of shift invariant spaces in $L^2(\mathbb{R}^n)$ in terms of range functions were obtained by Bownik in \cite{Bow}. These results were later extended to locally compact abelian groups in \cite{Cab} and \cite{Kamy}. For nonabelian compact groups, the shift invariant spaces were explored in \cite{RaSh}. Characterization of the orthonormality of a system of translates on the polarised Heisenberg group was studied in \cite{Azit} using the concept of bracket map. The shift invariant spaces associated with the twisted translations for $L^2(\mathbb{C}^n)$ were recently studied by Radha and Adhikari in \cite{RaS}. In \cite{RaS1}, characterizations of Bessel sequences, orthonormal bases, frames and Riesz bases were studied on the Heisenberg group for a shift invariant space with countably many mutually orthogonal generators. The structural properties of shift-modulation invariant spaces were studied by Bownik in \cite{Bow1}. These results were extended to locally compact abelian groups in \cite{Cab1}. 
\par
At this point, we shall provide the necessary background to understand our main results. The Heisenberg group $\mathbb{H}^n$ is a Lie group whose underlying manifold is $\mathbb{R}^n\times\mathbb{R}^n\times\mathbb{R}$ which satisfies the group law
\[(x,y,t)(u,v,s)=\left(x+u,y+v,t+s+\frac{1}{2}(u\cdot y-v\cdot x)\right).\]
It is a nonabelian noncompact locally compact group. The Haar measure on $\mathbb{H}^n$ is the Lebesgue measure $dxdydt$. It follows from the well known Stone-von Neumann theorem that every infinite dimensional irreducible unitary representation on the Heisenberg group is unitarily equivalent to the representation $\pi_\lambda,\,\lambda\in\mathbb{R}^\ast$, where $\pi_\lambda$ is defined by
\[\pi_\lambda(x,y,t)\varphi(\xi)=e^{2\pi i\lambda t}e^{2\pi i\lambda(x\cdot\xi+\frac{1}{2}x\cdot y)}\varphi(\xi+y),\quad\varphi\in L^{2}(\mathbb{R}^n).\]

For $f\in L^{1}(\mathbb{H}^n)$, the group Fourier transform $\hat{f}$ is defined as follows. For $\lambda\in\mathbb{R}^\ast,\,\hat{f}(\lambda)$ given by 
\[\hat{f}(\lambda)=\int\limits_{\mathbb{C}^n\times\mathbb{R}}f(z,t)\pi_\lambda(z,t)dzdt\] is a 
bounded operator on $L^{2}(\mathbb{R}^n)$. The inverse Fourier transform of $f\in L^{1}(\mathbb{H}^n)$ in the $t$ variable, denoted by $f^\lambda$, is defined as  
\begin{equation}
f^\lambda(z)=\int\limits_{\mathbb{R}}f(z,t)e^{2\pi i\lambda t}dt.
\end{equation}
It can be seen that $f^\lambda\in L^{1}(\mathbb{C}^n)$. For $f\in L^{1}(\mathbb{C}^n)$, the operator $W_\lambda(f)$ on $L^{2}(\mathbb{R}^n)$, is defined as \[W_\lambda(f)=\int\limits_{\mathbb{C}^n}f(z)\pi_\lambda(z,0)dz.\] Clearly, there is a relation between group Fourier transform and $W_\lambda$ given by
\begin{equation}\label{E:Reln}
\hat{f}(\lambda)=W_\lambda(f^\lambda).
\end{equation}
Moreover, $W_\lambda(f)$ is an integral operator on $L^{2}(\mathbb{R}^n)$ with kernel $K^\lambda_f$ given by
\begin{equation}\label{E:Kdef}
K^\lambda_f(\xi,\eta)=\int\limits_{\mathbb{R}^n}f(x,\eta-\xi)e^{\pi i\lambda x\cdot(\xi+\eta)}dx.
\end{equation} 
In particular when $\lambda=1,\,W_\lambda(f)$ is denoted by $W(f)$ which is called the Weyl transform of $f$ and the associated kernel is denoted by $K_f$.
\par
As in the case of the Euclidean Fourier transform, the definitions of $W_\lambda$ and the group Fourier transform $\hat{f}$ can be extended to functions in $L^{2}(\mathbb{C}^n)$ and $L^{2}(\mathbb{H}^n)$ respectively through the density argument. In fact, for $f\in L^{2}(\mathbb{C}^n),\,W_\lambda(f)$ is a Hilbert-Schmidt operator on $L^{2}(\mathbb{R}^n)$ which satisfies
\[\|W_\lambda(f)\|_{\mathcal{B}_2}=\|K^\lambda_f\|_{L^{2}(\mathbb{C}^n)}=\tfrac{1}{|\lambda|^{n/2}}\|f\|_{L^{2}(\mathbb{C}^n)},\]
where $\mathcal{B}_2=\mathcal{B}_2(L^{2}(\mathbb{R}^n))$ denotes the class of Hilbert-Schmidt operators on $L^{2}(\mathbb{R}^n)$. In other words, for $f,g\in L^{2}(\mathbb{C}^n)$,
\begin{equation}\label{E:Wnf}
\langle W_\lambda(f),W_\lambda(g)\rangle_{\mathcal{B}_2}=\langle K^\lambda_f,K^\lambda_g\rangle_{L^{2}(\mathbb{C}^n)}=\tfrac{1}{|\lambda|^n}\langle f,g\rangle_{L^{2}(\mathbb{C}^n)}.
\end{equation}
Furthermore, the group Fourier transform satisfies the Plancherel formula 
\[\|\hat{f}\|_{L^{2}(\mathbb{R}^\ast,\mathcal{B}_2;d\mu)}=\|f\|_{L^{2}(\mathbb{H}^n)},\]
where $L^{2}(\mathbb{R}^\ast,\mathcal{B}_2;d\mu)$ stands for the space of functions on $\mathbb{R}^\ast$ taking values in $\mathcal{B}_2$ and square integrable with respect to the measure $d\mu(\lambda)=|\lambda|^n d\lambda.$ Equivalently, we have
\begin{equation}
\langle\hat{f},\hat{g}\rangle_{L^{2}(\mathbb{R}^\ast,\mathcal{B}_2;d\mu)}=\int\limits_{\mathbb{R}}\langle \hat{f}(\lambda),\hat{g}(\lambda)\rangle_{\mathcal{B}_2}|\lambda|^n d\lambda=\langle f,g\rangle_{L^{2}(\mathbb{H}^n)}.
\end{equation} 
Then, it follows from (\ref{E:Reln}) and (\ref{E:Wnf}) that
\begin{equation}\label{E:Knf}
\langle f,g\rangle_{L^{2}(\mathbb{H}^n)}=\int\limits_{\mathbb{R}}\langle K^\lambda_{f^\lambda},K^\lambda_{g^\lambda}\rangle_{L^{2}(\mathbb{C}^n)}|\lambda|^n d\lambda.
\end{equation}
For a further study on Heisenberg group, we refer to \cite{Foll} and \cite{Thanga}.
\par
We organize the paper as follows. We discuss the orthonormality of the wavelet system on the Heisenberg group in section 2 and that of the twisted wavelet system on $\mathbb{C}$ in section 3.

\section{Orthonormality of wavelet system on $\mathbb{H}$}
Using the translation and the nonisotropic dilation defined in section 1, we consider the wavelet system $\{\delta_{2^j}L_{(k,l,m)}\psi:k,l,m,j\in\mathbb{Z}\}$ given by 
\begin{equation}
\delta_{2^j}L_{(k,l,m)}\psi(x,y,t)=2^{2j}\psi\left(2^j x-k,2^j y-l,2^{2j}t-m+\tfrac{1}{2}2^j(y\cdot k-x\cdot l)\right)
\end{equation}
for $\psi\in L^2(\mathbb{H}).$
\begin{definition}[\cite{RaS1}]\label{D:G}
For $\psi\in L^2(\mathbb{H}),\,(k,l)\in\mathbb{Z}^2$, we define the function $G^\psi_{k,l}$ as
\begin{align*}
G^\psi_{k,l}(\lambda)=\sum\limits_{r\in\mathbb{Z}}\sum\limits_{s\in\mathbb{Z}}\int\limits_0^1 \int\limits_{\mathbb{R}}K^{\lambda+r}_{\psi^{\lambda+r}}&(\xi+s,\eta)\overline{K^{\lambda+r}_{\psi^{\lambda+r}}(\xi+s+l,\eta)}\\
&\times e^{-\pi i(\lambda+r)k(2\xi+l)}e^{-2\pi i\lambda ks}d\eta d\xi|\lambda+r|,\,\lambda\in(0,1].
\end{align*}
\end{definition}

The following theorem is well known.
\begin{theorem}[\cite{RaS1}]\label{T:transortho}
If $\psi\in L^2(\mathbb{H})$, then $\{L_{(k,l,m)}\psi:(k,l,m)\in\mathbb{Z}^3\}$ is an orthonormal system in $L^2(\mathbb{H})$ if and only if the following conditions hold.
\begin{align*}
(i) &G^\psi_{0,0}(\lambda)=1\quad\text{a.e. }\lambda\in(0,1]\quad\text{and}\\
(ii)&G^\psi_{k,l}(\lambda)=0\quad\text{a.e. }\lambda\in(0,1],\text{ for all }(k,l)\neq(0,0)\text{ in }\mathbb{Z}^2,
\end{align*}
where $G^\psi_{k,l}$ is as in Definition \ref{D:G}.
\end{theorem}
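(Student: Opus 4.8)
The plan is to express the pairwise inner products of the system through the Plancherel identity (\ref{E:Knf}) and then to read off, by a Fourier-series argument in the central variable, the pointwise conditions encoded by $G^\psi_{k,l}$. Since left translation is unitary, the system $\{L_{(k,l,m)}\psi\}$ is orthonormal if and only if $\langle L_{(k,l,m)}\psi, L_{(k',l',m')}\psi\rangle$ equals $1$ when $(k,l,m)=(k',l',m')$ and $0$ otherwise, so the first task is to compute this inner product explicitly. Formula (\ref{E:Knf}) rewrites it as $\int_{\mathbb{R}}\langle K^\lambda_{(L_{(k,l,m)}\psi)^\lambda}, K^\lambda_{(L_{(k',l',m')}\psi)^\lambda}\rangle_{L^2(\mathbb{C})}|\lambda|\,d\lambda$, which reduces everything to understanding how a left translation acts on the Weyl kernel.

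First I would compute $(L_{(u,v,s)}\psi)^\lambda$ directly from the definition of $f^\lambda$; a change of variable in the $t$-integral gives $(L_{(u,v,s)}\psi)^\lambda(x,y)=e^{2\pi i\lambda s}e^{-\pi i\lambda(yu-xv)}\psi^\lambda(x-u,y-v)$, a twisted translate of $\psi^\lambda$ scaled by $e^{2\pi i\lambda s}$. Substituting this into (\ref{E:Kdef}) and evaluating the $x$-integral after the shift $x\mapsto x-u$, I expect the transformation rule
\[
K^\lambda_{(L_{(k,l,m)}\psi)^\lambda}(\xi,\eta)=e^{2\pi i\lambda m}\,e^{\pi i\lambda k(2\xi+l)}\,K^\lambda_{\psi^\lambda}(\xi+l,\eta).
\]
Inserting this together with its $(k',l',m')$-counterpart into (\ref{E:Knf}) expresses $\langle L_{(k,l,m)}\psi, L_{(k',l',m')}\psi\rangle$ as $\int_{\mathbb{R}}e^{2\pi i\lambda(m-m')}\Phi_{k,l,k',l'}(\lambda)|\lambda|\,d\lambda$, where $\Phi_{k,l,k',l'}$ is the $(\xi,\eta)$-integral of the two kernels against the phase $e^{\pi i\lambda[k(2\xi+l)-k'(2\xi+l')]}$.

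The key step is the Fourier analysis in the central parameter. For fixed $(k,l,k',l')$ the indices $m,m'$ enter only through $e^{2\pi i\lambda(m-m')}$, so as $m-m'$ ranges over $\mathbb{Z}$ these numbers are precisely the Fourier coefficients of the $1$-periodization $\lambda\mapsto\sum_{r}\Phi_{k,l,k',l'}(\lambda+r)|\lambda+r|$ on $(0,1]$. By completeness of $\{e^{2\pi i\lambda p}\}_{p\in\mathbb{Z}}$ the orthonormality relations hold if and only if this periodization equals the constant $1$ when $(k,l)=(k',l')$ and the constant $0$ otherwise. It remains to identify these periodizations with the functions of Definition~\ref{D:G}. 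Reindexing the inner $\xi$-integral by $\sigma=\xi+s$ turns $\sum_{s}\int_0^1 d\xi$ into $\int_{\mathbb{R}}d\sigma$, and the integer identity $e^{2\pi irks}=1$ is exactly what lets the two exponentials of Definition~\ref{D:G} collapse to the single factor $e^{-\pi i(\lambda+r)k(2\sigma+l)}$. For $(k,l)=(k',l')$ the phase cancels, $\Phi_{k,l,k,l}(\lambda)=\|K^\lambda_{\psi^\lambda}\|^2_{L^2(\mathbb{C})}$ by translation invariance in $\xi$, and its periodization is $G^\psi_{0,0}$, giving condition (i); for $(k,l)\neq(k',l')$, a shift centring the two kernel arguments produces the difference index and yields condition (ii).

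The step I expect to be the main obstacle is the phase bookkeeping in this last reduction. The noncommutativity of $\mathbb{H}$ produces, after the centring shift, a residual unimodular factor $e^{\pi i\lambda(k'l-kl')}$ multiplying the difference expression, and one must track it carefully through the $1$-periodization to confirm that the off-diagonal conditions organise into the single-index functions $G^\psi_{k-k',\,l-l'}$ exactly as in Definition~\ref{D:G}. Managing this phase together with the two simultaneous periodizations (over $r$ in $\lambda$ and over $s$ in $\xi$), and checking that the surviving exponentials reassemble into the precise form of $G^\psi_{k,l}$, is where essentially all the computation lies; by contrast the Plancherel reduction and the Fourier-coefficient characterization are routine.
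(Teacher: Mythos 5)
A preliminary remark: the paper never proves Theorem \ref{T:transortho}; it is imported from \cite{RaS1} with the words ``the following theorem is well known'', so there is no internal proof to compare against. The closest internal material is Lemmas \ref{L:waveNtwis} and \ref{L:KdilnKwave} specialised to $j=0$, and your preparatory computations agree with them: the formula $(L_{(k,l,m)}\psi)^\lambda=e^{2\pi i\lambda m}(T^t_{(k,l)})^{\lambda}\psi^{\lambda}$, the kernel rule $K^\lambda_{(L_{(k,l,m)}\psi)^\lambda}(\xi,\eta)=e^{2\pi i\lambda m}e^{\pi i\lambda k(2\xi+l)}K^\lambda_{\psi^\lambda}(\xi+l,\eta)$, the reduction of all inner products through (\ref{E:Knf}) to Fourier coefficients of $1$-periodizations in $\lambda$, and the observation that Definition \ref{D:G} collapses, via $\sigma=\xi+s$ and $e^{2\pi irks}=1$, to the single periodization $G^\psi_{k,l}(\lambda)=\sum_r\Psi_{k,l}(\lambda+r)|\lambda+r|$ with $\Psi_{k,l}(\mu)=\int\int K^\mu_{\psi^\mu}(\sigma,\eta)\overline{K^\mu_{\psi^\mu}(\sigma+l,\eta)}e^{-\pi i\mu k(2\sigma+l)}\,d\eta\,d\sigma$, are all correct.

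The gap sits exactly at the step you defer as ``phase bookkeeping'', and it cannot be closed by bookkeeping. Your residual factor is $e^{\pi i\lambda c}$ with $c=k'l-kl'$, a character of frequency $c/2$, which is a half-integer whenever $c$ is odd. Under the periodization $\lambda\mapsto\lambda+r$ it is not invariant: $e^{\pi i(\lambda+r)c}=(-1)^{rc}e^{\pi i\lambda c}$. Consequently, for pairs with $c$ even the periodized inner products are indeed Fourier coefficients of a unimodular multiple of $G^\psi_{k'-k,l'-l}$ and your argument closes; but for pairs with $c$ odd they are Fourier coefficients of $e^{\pi i\lambda c}\sum_r(-1)^{r}\Psi_{k'-k,l'-l}(\lambda+r)|\lambda+r|$, an alternating-sign periodization which is a genuinely different function from $G^\psi_{k'-k,l'-l}$, and whose vanishing is neither condition (i) nor condition (ii). For instance, orthogonality of $L_{(1,1,0)}\psi$ and $L_{(1,0,0)}\psi$ (here $c=1$) is a condition of this second kind. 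The structural reason is that $\mathbb{Z}^3$ is not a subgroup of $\mathbb{H}$: the central entry of $(k',l',m')^{-1}(k,l,m)$ is $m-m'+\tfrac{1}{2}(k'l-kl')\in\tfrac{1}{2}\mathbb{Z}$, so conditions (i)--(ii), which are exactly equivalent to $\langle L_{(k,l,m)}\psi,\psi\rangle=\delta_{(k,l,m),(0,0,0)}$, do not by themselves control the pairs whose ``difference'' falls outside $\mathbb{Z}^3$. As written, your proposal proves necessity, and sufficiency only for pairs with $k'l-kl'$ even; to complete it you would have to show that (i)--(ii) force the alternating periodizations to vanish as well (nothing in the Fourier-coefficient argument yields this), or else work over a lattice such as $2\mathbb{Z}\times\mathbb{Z}\times\mathbb{Z}$, which is a genuine subgroup of $\mathbb{H}$, makes $c$ automatically even, and removes the obstruction --- the setting in which results of the type quoted from \cite{RaS1} are naturally proved.
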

\noindent See also \cite{Azit}.
\par
Now, our main result provides the necessary and sufficient conditions for the orthonormality of the system $\{\delta_{2^j}L_{(k,l,m)}\psi:k,l,m,j\in\mathbb{Z}\}$ on $\mathbb{H}$ which is stated as follows.
\begin{theorem}\label{T:orthoHei}
Let $\psi\in L^2(\mathbb{H})$. For $j_1,j_2,k_1,k_2,l_1,l_2\in\mathbb{Z},\,\lambda\in(0,1] $, let
\begin{equation}\label{E:Fdef}
\begin{split}
&F^\psi_{j_1,j_2,k_1,k_2,l_1,l_2}(\lambda)\\
&\,=\sum\limits_{r\in\mathbb{Z}}\sum\limits_{s\in\mathbb{Z}}\int\limits_0^1 \int\limits_{\mathbb{R}}K^{2^{2(j_2-j_1)}(\lambda+r)}_{\psi^{2^{2(j_2-j_1)}(\lambda+r)}}(2^{j_1}(\xi+s)+l_1,2^{j_1}\eta)\overline{K^{\lambda+r}_{\psi^{\lambda+r}}(2^{j_2}(\xi+s)+l_2,2^{j_2}\eta)}\\
&\qquad\qquad\quad\;\,\times e^{\pi i2^{2(j_2-j_1)}(\lambda+r)k_1(2^{j_1+1}(\xi+s)+l_1)}e^{-\pi i(\lambda+r)k_2(2^{j_2+1}(\xi+s)+l_2)}d\eta d\xi|\lambda+r|. 
\end{split}
\end{equation}

The wavelet system $\{\delta_{2^j}L_{(k,l,m)}\psi:k,l,m,j\in\mathbb{Z}\}$ is orthonormal in $L^2(\mathbb{H})$ if and only if the following conditions hold.
\begin{align*}
(i) &G^\psi_{0,0}(\lambda)=1\quad\text{a.e. }\lambda\in(0,1],\\
(ii)&G^\psi_{k,l}(\lambda)=0\quad\text{a.e. }\lambda\in(0,1],\text{ for all }(k,l)\neq(0,0)\text{ in }\mathbb{Z}^2,\\
(iii)&F^\psi_{j_1,j_2,k_1,k_2,l_1,l_2}(\lambda)=0\quad\text{a.e. }\lambda\in(0,1],\text{ for }j_2>j_1,k_1,k_2,l_1,l_2\text{ in }\mathbb{Z},
\end{align*}
where $G^\psi_{k,l}$ is as in Definition \ref{D:G}.
\end{theorem}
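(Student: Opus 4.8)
The plan is to expand orthonormality into the family of inner-product conditions $\langle \delta_{2^{j_1}}L_{(k_1,l_1,m_1)}\psi,\,\delta_{2^{j_2}}L_{(k_2,l_2,m_2)}\psi\rangle$, required to equal $1$ when $(j_1,k_1,l_1,m_1)=(j_2,k_2,l_2,m_2)$ and $0$ otherwise, and to separate the diagonal-scale case $j_1=j_2$ from the cross-scale case $j_1\neq j_2$. Since the Haar measure on $\mathbb{H}$ is $dx\,dy\,dt$, a direct change of variables shows that each dilation $\delta_{2^j}$ is unitary on $L^2(\mathbb{H})$; hence for $j_1=j_2=j$ the inner product equals $\langle L_{(k_1,l_1,m_1)}\psi,\,L_{(k_2,l_2,m_2)}\psi\rangle$, independently of $j$. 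Thus the diagonal-scale part of orthonormality is exactly the orthonormality of $\{L_{(k,l,m)}\psi:(k,l,m)\in\mathbb{Z}^3\}$, which by Theorem \ref{T:transortho} is precisely conditions $(i)$ and $(ii)$. It therefore remains to show that the cross-scale orthogonality relations are equivalent to $(iii)$.

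By symmetry ($\langle g,f\rangle=\overline{\langle f,g\rangle}$) I would assume $j_2>j_1$. First I would record how the two generating operations act after taking the inverse Fourier transform in $t$: a short computation gives $(L_{(k,l,m)}\psi)^\mu(x,y)=e^{2\pi i\mu m}e^{-\pi i\mu(yk-xl)}\psi^\mu(x-k,y-l)$ and $(\delta_a\psi)^\lambda(x,y)=\psi^{\lambda/a^2}(ax,ay)$ for $a=2^j>0$. Composing these and inserting the result into the kernel formula (\ref{E:Kdef}), with the substitution $x'=ax$, yields the key identity
\[K^\lambda_{(\delta_a L_{(k,l,m)}\psi)^\lambda}(\xi,\eta)=\tfrac{1}{a}\,e^{2\pi i\mu m}\,e^{\pi i\mu k(2a\xi+l)}\,K^\mu_{\psi^\mu}(a\xi+l,\,a\eta),\qquad \mu=2^{-2j}\lambda,\]
which reduces the remaining analysis to bookkeeping of exponential factors.

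Next I would substitute the two kernels (for $a_1=2^{j_1},a_2=2^{j_2}$, with $\mu_i=2^{-2j_i}\lambda$) into the Plancherel-type formula (\ref{E:Knf}). Periodizing the $\xi$-integral over $\mathbb{Z}$, and making the change of variable $\nu=\mu_2=2^{-2j_2}\lambda$ followed by the periodization $\nu=\lambda'+r$ with $\lambda'\in(0,1]$ and $r\in\mathbb{Z}$, I would recognize the resulting sum--integral as exactly $F^\psi_{j_1,j_2,k_1,k_2,l_1,l_2}(\lambda')$; here the superscripts $2^{2(j_2-j_1)}(\lambda'+r)$ and $(\lambda'+r)$ arise because $\mu_1=2^{2(j_2-j_1)}\nu$ while $\mu_2=\nu$. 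The decisive point is the fate of the phase $e^{2\pi i(\mu_1 m_1-\mu_2 m_2)}=e^{2\pi i\nu(2^{2(j_2-j_1)}m_1-m_2)}$ coming from the central variables: since $2^{2(j_2-j_1)}m_1-m_2\in\mathbb{Z}$, the contribution of the integer part $r$ is trivial, so this phase factors out of the $r$-sum as $e^{2\pi i\lambda'(2^{2(j_2-j_1)}m_1-m_2)}$. This gives
\[\langle \delta_{2^{j_1}}L_{(k_1,l_1,m_1)}\psi,\,\delta_{2^{j_2}}L_{(k_2,l_2,m_2)}\psi\rangle=C_{j_1,j_2}\int_0^1 e^{2\pi i\lambda'(2^{2(j_2-j_1)}m_1-m_2)}F^\psi_{j_1,j_2,k_1,k_2,l_1,l_2}(\lambda')\,d\lambda',\]
with a nonzero constant $C_{j_1,j_2}$.

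Finally, I would observe that for fixed $j_1<j_2$ and $k_i,l_i$, orthonormality forces the left-hand side to vanish for every $m_1,m_2\in\mathbb{Z}$, and conversely. As $(m_1,m_2)$ ranges over $\mathbb{Z}^2$ the integer $N=2^{2(j_2-j_1)}m_1-m_2$ ranges over all of $\mathbb{Z}$, so this vanishing is equivalent to the statement that every Fourier coefficient of the $1$-periodic function $\lambda'\mapsto F^\psi_{j_1,j_2,k_1,k_2,l_1,l_2}(\lambda')$ is zero. Since $\psi\in L^2(\mathbb{H})$ guarantees, via (\ref{E:Wnf}) and the Cauchy--Schwarz inequality, that this function belongs to $L^1(0,1]$, the uniqueness theorem for Fourier series yields $F^\psi_{j_1,j_2,k_1,k_2,l_1,l_2}(\lambda')=0$ a.e., which is condition $(iii)$; combining this with the diagonal-scale analysis proves the theorem. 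I expect the main obstacle to be the second and third steps, namely carrying the dilation--translation action accurately through the kernel (\ref{E:Kdef}) and matching the resulting exponentials, change of variables, and double periodization exactly against the definition (\ref{E:Fdef}) of $F^\psi$; the conceptual heart is the realization that summation over the central variables $m_1,m_2$ converts cross-scale orthogonality into the vanishing of all Fourier coefficients of $F^\psi$.
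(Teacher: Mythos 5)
Your proposal is correct and follows essentially the same route the paper takes: the paper's Lemmas \ref{L:waveNtwis} and \ref{L:KdilnKwave} are exactly your ``key identity'' for the kernel of $(\delta_{2^j}L_{(k,l,m)}\psi)^\lambda$, after which the paper likewise feeds this into the Plancherel-type formula (\ref{E:Knf}), periodizes in $\lambda$ and $\xi$, reduces equal scales to Theorem \ref{T:transortho} (giving $(i)$--$(ii)$), and treats the cross-scale case by letting the central translations $m_1,m_2$ generate all Fourier coefficients of $F^\psi$, whose vanishing is condition $(iii)$. Your handling of the phase $e^{2\pi i(\lambda'+r)(2^{2(j_2-j_1)}m_1-m_2)}$ and the $L^1$ justification for the uniqueness-of-Fourier-coefficients step are both sound.
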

In order to prove Theorem \ref{T:orthoHei}, we shall prove the following lemmas. At first, in view of (\ref{E:Knf}), we need to determine the inverse Fourier transform of $\delta_{2^j}L_{(k,l,m)}\psi$ with respect to $t$ variable. In other words, we need to determine $(\delta_{2^j}L_{(k,l,m)}\psi)^\lambda$ and also the associated kernel $K^\lambda_{(\delta_{2^j}L_{(k,l,m)}\psi)^\lambda}$. In Lemma \ref{L:waveNtwis}, we shall express $(\delta_{2^j}L_{(k,l,m)}\psi)^\lambda$ in terms of the $\lambda$-twisted translation $(T^t_{(k,l)})^\lambda$ and dilation $\mathcal{D}_{2^j}$ on $\mathbb{R}^2$, where $(T^t_{(k,l)})^\lambda$ and $\mathcal{D}_{2^j}$ are defined below.

\begin{definition}\label{D:twis}
For $(k,l)\in\mathbb{Z}^2$ and $j\in\mathbb{Z}$, the $\lambda$-twisted translation $(T^t_{(k,l)})^\lambda$ and the dilation $\mathcal{D}_{2^j}$ are defined as follows.
\begin{align*}
(T^t_{(k,l)})^\lambda\varphi(x,y)&=e^{\pi i\lambda(x\cdot l-y\cdot k)}\varphi(x-k,y-l)\qquad\text{ and}\\
 \mathcal{D}_{2^j}\varphi(x,y)&=2^j\varphi(2^jx,2^jy),\qquad\quad \text{for }\varphi\in L^2(\mathbb{R}^2).
\end{align*}
\end{definition}

\begin{lemma}\label{L:waveNtwis}
For $\psi\in L^2(\mathbb{H}),\, (k,l,m)\in\mathbb{Z}^3,\, j\in\mathbb{Z}$, the inverse Fourier transform of $\delta_{2^j}L_{(k,l,m)}\psi$ with respect to $t$ variable satisfies \[(\delta_{2^j}L_{(k,l,m)}\psi)^\lambda=2^{-j}e^{2\pi i\lambda2^{-2j}m}\mathcal{D}_{2^j}(T^t_{(k,l)})^{\lambda2^{-2j}}\psi^{\lambda2^{-2j}},\]
where $\psi^{\lambda2^{-2j}}$ is the inverse Fourier transform of $\psi$ in the $t$ variable given by $\psi^{\lambda 2^{-2j}}(z)=\int_{\mathbb{R}}\psi(z,t)e^{2\pi i\lambda 2^{-2j}t}dt.$
\end{lemma}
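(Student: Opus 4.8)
The plan is to prove the identity by a direct computation: I would unwind the definition of the inverse Fourier transform in the $t$ variable, perform a single change of variable to absorb the anisotropic dilation, and then recognize the resulting expression as a twisted translation followed by a dilation. First I would write
\[
(\delta_{2^j}L_{(k,l,m)}\psi)^\lambda(x,y)=\int_{\mathbb{R}}(\delta_{2^j}L_{(k,l,m)}\psi)(x,y,t)\,e^{2\pi i\lambda t}\,dt
\]
and substitute the explicit formula
\[
(\delta_{2^j}L_{(k,l,m)}\psi)(x,y,t)=2^{2j}\psi\Bigl(2^jx-k,\,2^jy-l,\,2^{2j}t-m+\tfrac12 2^j(y\cdot k-x\cdot l)\Bigr)
\]
recorded at the beginning of this section.

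The main step is the change of variable $u=2^{2j}t-m+\tfrac12 2^j(y\cdot k-x\cdot l)$, with $du=2^{2j}\,dt$. This substitution cancels the amplitude $2^{2j}$ coming from $\delta_{2^j}$ against the Jacobian $2^{-2j}$, and it is precisely what forces the spectral parameter to be rescaled: the exponential $e^{2\pi i\lambda t}$ is converted into $e^{2\pi i\lambda 2^{-2j}(u+m-\frac12 2^j(y\cdot k-x\cdot l))}$, so that after pulling out the $u$-independent factors the remaining integral is exactly $\psi^{\lambda 2^{-2j}}(2^jx-k,2^jy-l)$.

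It then remains to reorganize the $u$-independent phase. The factor $e^{2\pi i\lambda 2^{-2j}m}$ comes out untouched, while the cocycle term $-\tfrac12 2^j(y\cdot k-x\cdot l)$ in the exponent produces $e^{\pi i\lambda 2^{-j}(x\cdot l-y\cdot k)}$; here the coefficient $\tfrac12$ combined with $\lambda 2^{-2j}\cdot 2^j$ yields a factor $\pi$ rather than $2\pi$. I would then compare with the claimed right-hand side by unwinding Definition \ref{D:twis}: applying $(T^t_{(k,l)})^{\lambda 2^{-2j}}$ to $\psi^{\lambda 2^{-2j}}$ and then $\mathcal{D}_{2^j}$ reproduces the evaluation at $(2^jx-k,2^jy-l)$, the phase $e^{\pi i\lambda 2^{-j}(x\cdot l-y\cdot k)}$ (since $\mathcal{D}_{2^j}$ rescales the arguments of the twisted-translation phase by $2^j$), and the amplitude $2^j$, which cancels the explicit $2^{-j}$ in the statement.

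The computation is routine; the only delicate point is the bookkeeping of the three distinct scalings — the $2^{2j}$ by which $\delta_{2^j}$ acts on $t$, the $2^j$ by which it acts on $(x,y)$, and the induced $2^{-2j}$ rescaling of $\lambda$ — and checking that the half-integer Heisenberg cocycle coefficient produces the correct $\pi$ in the twisted-translation phase. No convergence issue arises, since one may establish the identity first for $\psi$ in a dense subspace of $L^2(\mathbb{H})$ on which the $t$-integral converges absolutely, and then extend by continuity.
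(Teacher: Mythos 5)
Your proposal is correct and follows essentially the same route as the paper's proof: the same substitution $u=2^{2j}t-m+\tfrac12 2^j(y\cdot k-x\cdot l)$ in the $t$-integral, the same bookkeeping of the phases $e^{2\pi i\lambda 2^{-2j}m}$ and $e^{\pi i\lambda 2^{-j}(x\cdot l-y\cdot k)}$, and the same final identification via Definition \ref{D:twis}. The added density remark is a harmless refinement of a point the paper leaves implicit.
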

\begin{proof}
Consider,
\begin{align*}
(\delta_{2^j}L_{(k,l,m)}&\psi)^\lambda(x,y)\\
&=\int\limits_\mathbb{R}2^{2j}\psi\left(2^j x-k,2^j y-l,2^{2j}t-m+\tfrac{1}{2}2^j(y\cdot k-x\cdot l)\right)e^{2\pi i\lambda t}dt.
\end{align*}
Now, by applying the change of variable $s=2^{2j}t-m+\tfrac{1}{2}2^j(y\cdot k-x\cdot l)$, we get,
\begin{align*}
(\delta_{2^j}L_{(k,l,m)}&\psi)^\lambda(x,y)\\
&=e^{2\pi i\lambda 2^{-2j}m}e^{2^j\pi i\lambda2^{-2j}(x\cdot l-y\cdot k)}\int\limits_\mathbb{R}\psi(2^j x-k,2^j y-l,s)e^{2\pi i\lambda2^{-2j}s}ds\\
&=e^{2\pi i\lambda 2^{-2j}m}e^{\pi i\lambda2^{-2j}(2^jx\cdot l-2^jy\cdot k)}\psi^{\lambda2^{-2j}}(2^j x-k,2^j y-l)\\
&=2^{-j}e^{2\pi i\lambda 2^{-2j}m}\mathcal{D}_{2^j}(T^t_{(k,l)})^{\lambda2^{-2j}}\psi^{\lambda2^{-2j}}(x,y).
\end{align*}
\end{proof}
\begin{lemma}\label{L:KdilnKwave}
Let $\varphi\in L^2(\mathbb{R}^2)$. Then the kernel of $W_\lambda(\mathcal{D}_{2^j}\varphi)$ and hence that of $W_\lambda(\mathcal{D}_{2^j}(T^t_{(k,l)})^{\lambda2^{-2j}}\varphi)$, denoted by $K^\lambda_{\mathcal{D}_{2^j}\varphi}$ and $K^\lambda_{\mathcal{D}_{2^j}(T^t_{(k,l)})^{\lambda2^{-2j}}\varphi}$ respectively, satisfy the following relations.
\begin{align}
K^\lambda_{\mathcal{D}_{2^j}\varphi}(\xi,\eta)&=K^{\lambda2^{-2j}}_\varphi(2^j\xi,2^j\eta),\label{E:Kdil}\\
K^\lambda_{\mathcal{D}_{2^j}(T^t_{(k,l)})^{\lambda2^{-2j}}\varphi}(\xi,\eta)&=e^{\pi i\lambda2^{-2j}k(2^{j+1}\xi+l)}K^{\lambda2^{-2j}}_\varphi(2^j\xi+l,2^j\eta).\label{Ktwiswave}
\end{align}
\end{lemma}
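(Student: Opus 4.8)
The plan is to establish the two kernel identities by direct computation from the defining formula (\ref{E:Kdef}) for the Weyl-transform kernel together with elementary changes of variable, deriving the second identity from the first, exactly as the phrasing ``and hence'' suggests. Throughout I work with $n=1$, so the kernel is $K^\lambda_\varphi(\xi,\eta)=\int_{\mathbb{R}}\varphi(x,\eta-\xi)e^{\pi i\lambda x(\xi+\eta)}dx$.

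For (\ref{E:Kdil}), I would begin from $K^\lambda_{\mathcal{D}_{2^j}\varphi}(\xi,\eta)=\int_{\mathbb{R}}\mathcal{D}_{2^j}\varphi(x,\eta-\xi)e^{\pi i\lambda x(\xi+\eta)}dx$ and substitute $\mathcal{D}_{2^j}\varphi(x,y)=2^j\varphi(2^jx,2^jy)$ from Definition \ref{D:twis}. The single change of variable $u=2^jx$ (so that $dx=2^{-j}du$) absorbs the factor $2^j$ and rescales the phase via $\lambda x(\xi+\eta)=\lambda 2^{-2j}u(2^j\xi+2^j\eta)$, after which the integral is visibly $K^{\lambda2^{-2j}}_\varphi(2^j\xi,2^j\eta)$. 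This is a routine substitution with no analytic subtlety.

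For (\ref{Ktwiswave}), I would first apply (\ref{E:Kdil}) with $\varphi$ replaced by $(T^t_{(k,l)})^{\lambda2^{-2j}}\varphi$, which reduces the problem to computing the kernel of a pure $\mu$-twisted translation at parameter $\mu=\lambda2^{-2j}$, namely $K^\mu_{(T^t_{(k,l)})^\mu\varphi}$. Inserting the definition of $(T^t_{(k,l)})^\mu$ into (\ref{E:Kdef}) and making the substitution $v=x-k$, the integrand carries three phase contributions (the twist $e^{\pi i\mu(xl-(\eta-\xi)k)}$, the kernel phase $e^{\pi i\mu x(\xi+\eta)}$, and the shift from $x=v+k$). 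The key algebraic step is to recognize that after this substitution the remaining integral matches the kernel of $\varphi$ evaluated at a shifted point $(\xi',\eta')$ determined by $\eta'-\xi'=(\eta-\xi)-l$ and $\xi'+\eta'=\xi+\eta+l$, which force $\xi'=\xi+l$ and $\eta'=\eta$; that is, the shift lands on the first kernel variable only. This produces the clean intermediate identity $K^\mu_{(T^t_{(k,l)})^\mu\varphi}(\xi,\eta)=e^{\pi i\mu k(2\xi+l)}K^\mu_\varphi(\xi+l,\eta)$.

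The final step is to specialize $\mu=\lambda2^{-2j}$ and evaluate at $(2^j\xi,2^j\eta)$, turning the prefactor $e^{\pi i\mu k(2\xi+l)}$ into $e^{\pi i\lambda2^{-2j}k(2^{j+1}\xi+l)}$ and the shifted argument into $(2^j\xi+l,2^j\eta)$, which is precisely (\ref{Ktwiswave}). The main obstacle I anticipate is purely the bookkeeping in the twisted-translation computation: combining the $\eta$-dependent and $\xi$-dependent pieces of the two phase factors so that the cross-terms $\mp k\eta$ cancel while the surviving $k$-dependent phase is exactly $\pi i\mu k(2\xi+l)$, and simultaneously verifying that the translation $v=x-k$ only shifts the first variable of $K^\mu_\varphi$. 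Beyond this careful collection of exponents, nothing more than Fubini and change-of-variables on the explicit integral (\ref{E:Kdef}) is required.
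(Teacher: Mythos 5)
Your proposal is correct and follows essentially the same route as the paper: (\ref{E:Kdil}) via the substitution $u=2^jx$ in (\ref{E:Kdef}), and (\ref{Ktwiswave}) by first applying (\ref{E:Kdil}) to $(T^t_{(k,l)})^{\lambda 2^{-2j}}\varphi$ and then expanding the twisted translation with the shift $v=x-k$ and collecting phases. The only cosmetic difference is that you record the general identity $K^\mu_{(T^t_{(k,l)})^\mu\varphi}(\xi,\eta)=e^{\pi i\mu k(2\xi+l)}K^\mu_\varphi(\xi+l,\eta)$ before specializing $\mu=\lambda 2^{-2j}$ at the point $(2^j\xi,2^j\eta)$, whereas the paper carries out the identical computation directly at the scaled point.
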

\begin{proof}
From (\ref{E:Kdef}) and Definition \ref{D:twis}, we have 
\[K^\lambda_{\mathcal{D}_{2^j}\varphi}(\xi,\eta)=\int\limits_{\mathbb{R}}2^j\varphi(2^jx,2^j(\eta-\xi))e^{\pi i\lambda x\cdot(\xi+\eta)}dx.\]
Applying the change of variable $u=2^j x$, we get 
\[K^\lambda_{\mathcal{D}_{2^j}\varphi}(\xi,\eta)=\int\limits_{\mathbb{R}}\varphi(u,2^j\eta-2^j\xi)e^{\pi i\lambda 2^{-2j}u\cdot(2^j\xi+2^j\eta)}du.\]
The right hand side of the above equation is nothing but $K^{\lambda2^{-2j}}_\varphi(2^j\xi,2^j\eta)$, thus proving (\ref{E:Kdil}).
\par
Now, making use of (\ref{E:Kdil}), we see that
\begin{align*}
&K^\lambda_{\mathcal{D}_{2^j}(T^t_{(k,l)})^{\lambda2^{-2j}}\varphi}(\xi,\eta)=K^{\lambda 2^{-2j}}_{(T^t_{(k,l)})^{\lambda2^{-2j}}\varphi}(2^j\xi,2^j\eta)\\
&\qquad=\int\limits_\mathbb{R}e^{\pi i\lambda2^{-2j}(x\cdot l-(2^j\eta-2^j\xi)\cdot k)}\varphi(x-k,2^j\eta-2^j\xi-l)e^{\pi i\lambda2^{-2j}x\cdot(2^j\xi+2^j\eta)}dx\\
&\qquad=e^{\pi i\lambda2^{-2j}k(2^{j+1}\xi+l)}\int\limits_\mathbb{R}\varphi(u,2^j\eta-(2^j\xi+l))e^{\pi i\lambda 2^{-2j}u\cdot(2^j\xi+2^j\eta+l)}du\\
&\qquad=e^{\pi i\lambda2^{-2j}k(2^{j+1}\xi+l)}K^{\lambda2^{-2j}}_\varphi(2^j\xi+l,2^j\eta),
\end{align*}
thereby proving (\ref{Ktwiswave}).
\end{proof}

We observe from Lemma \ref{L:waveNtwis} that there is a connection between wavelet system on the Heisenberg group and a wavelet system on $\mathbb{C}$, namely the twisted wavelet system $\{\mathcal{D}_{2^j}(T^t_{(k,l)})^{2^{-2j}}\varphi:k,l,j\in\mathbb{Z}\}$ for $\varphi\in L^2(\mathbb{C})$. Thus it becomes a natural question to investigate the necessary and sufficient condition for the orthonormality of the twisted wavelet system on $\mathbb{C}$. This problem is studied in the forthcoming section.

\section{Orthonormality of twisted wavelet system on $\mathbb{C}$}
Before stating the main result of this section, we shall consider the following lemma.
\begin{lemma}\label{L:Ktwistrans}
Let $\varphi\in L^2(\mathbb{R}^2)$. Then the kernel of the Weyl transform of $\mathcal{D}_{2^j}\varphi$, $(T^t_{(k,l)})^{2^{-2j}}\varphi$ and $\mathcal{D}_{2^j}(T^t_{(k,l)})^{2^{-2j}}\varphi$ respectively satisfy the following relations. 
\begin{align*}
(i&)\, K_{\mathcal{D}_{2^j}\varphi}(\xi,\eta)=K_\varphi^{2^{-2j}}(2^j\xi,2^j\eta),\quad j\in\mathbb{Z}, \\
(ii&)\, K_{(T^t_{(k,l)})^{2^{-2j}}\varphi}(\xi,\eta)\\
&\qquad\;=e^{\pi i2^{-2j}kl}e^{\pi ik(1+2^{-2j})\xi}e^{\pi ik(1-2^{-2j})\eta}K_{e((\frac{2^{-2j}-1}{2})l,0)\varphi}(\xi+l,\eta),\\
&\; where,\, for\,(a,b)\in\mathbb{R}^{2},\, the\, operator\, e(a,b)\, on\,L^{2}(\mathbb{R}^{2})\, is\, given\, by \\ 
&\; (e(a,b)\varphi)(x,y)=e^{2\pi i(ax+by)}\varphi(x,y),\\
(iii&\,) K_{\mathcal{D}_{2^j}(T^t_{(k,l)})^{2^{-2j}}\varphi}(\xi,\eta)=e^{\pi i 2^{-2j}k(2^{j+1}\xi+l)}K_\varphi^{2^{-2j}}(2^j\xi+l,2^j\eta).
\end{align*}
\end{lemma}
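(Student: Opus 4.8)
The plan is to treat the three identities quite differently: (i) and (iii) are merely the $\lambda=1$ specializations of Lemma \ref{L:KdilnKwave}, whereas (ii) requires a direct computation from the definition of the kernel. Since the Weyl transform is $W_1$, setting $\lambda=1$ in (\ref{E:Kdil}) turns the left-hand side into $K_{\mathcal{D}_{2^j}\varphi}(\xi,\eta)$ and the right-hand side into $K^{2^{-2j}}_\varphi(2^j\xi,2^j\eta)$, which is exactly (i). In the same way, putting $\lambda=1$ in (\ref{Ktwiswave}) collapses the exponent to $e^{\pi i2^{-2j}k(2^{j+1}\xi+l)}$ and the kernel to $K^{2^{-2j}}_\varphi(2^j\xi+l,2^j\eta)$, which is (iii). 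No further argument is needed for these two.

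For (ii) I would start from the kernel formula (\ref{E:Kdef}) with $\lambda=1$ and $n=1$ and insert the definition of $(T^t_{(k,l)})^{2^{-2j}}\varphi$ from Definition \ref{D:twis}, obtaining $K_{(T^t_{(k,l)})^{2^{-2j}}\varphi}(\xi,\eta)=\int_{\mathbb{R}}e^{\pi i2^{-2j}(xl-(\eta-\xi)k)}\varphi(x-k,\eta-\xi-l)e^{\pi ix(\xi+\eta)}\,dx$. After the change of variable $u=x-k$, the exponentials split into a prefactor $e^{\pi i2^{-2j}(kl-(\eta-\xi)k)}e^{\pi ik(\xi+\eta)}$ that leaves the integral, times a residual integrand $e^{\pi i2^{-2j}ul}\varphi(u,\eta-\xi-l)e^{\pi iu(\xi+\eta)}$.

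The key manipulation is to rewrite $e^{\pi iu(\xi+\eta)}=e^{\pi iu((\xi+l)+\eta)}e^{-\pi iul}$, so that the phase matches the kernel formula evaluated at the shifted point $(\xi+l,\eta)$, where indeed $\eta-\xi-l=\eta-(\xi+l)$. The leftover factor $e^{\pi i2^{-2j}ul}e^{-\pi iul}=e^{\pi i(2^{-2j}-1)ul}=e^{2\pi i\frac{2^{-2j}-1}{2}l\,u}$ is then precisely the modulation $e((\frac{2^{-2j}-1}{2})l,0)$ applied to $\varphi$ in the first variable, so the residual integral equals $K_{e((\frac{2^{-2j}-1}{2})l,0)\varphi}(\xi+l,\eta)$.

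It then remains to simplify the prefactor by collecting the coefficients of $\xi$ and $\eta$: the $\xi$-exponentials combine to $e^{\pi ik(1+2^{-2j})\xi}$, the $\eta$-exponentials to $e^{\pi ik(1-2^{-2j})\eta}$, and the surviving constant is $e^{\pi i2^{-2j}kl}$, which together reproduce the asserted formula. The only genuine obstacle is the bookkeeping in (ii)—specifically, shifting the linear phase to line the integral up with the kernel at $(\xi+l,\eta)$ and then recognizing the residual exponential as an instance of the modulation operator $e(a,b)$; once these two observations are in place, everything else is a routine rearrangement of exponential factors.
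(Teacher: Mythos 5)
Your proposal is correct: parts (i) and (iii) are indeed just the $\lambda=1$ specializations of Lemma \ref{L:KdilnKwave}, and your direct computation for (ii) — change of variable $u=x-k$, the phase shift $e^{\pi iu(\xi+\eta)}=e^{\pi iu((\xi+l)+\eta)}e^{-\pi iul}$, and recognition of $e^{\pi i(2^{-2j}-1)ul}$ as the modulation $e\bigl(\tfrac{2^{-2j}-1}{2}l,0\bigr)$ — reproduces exactly the stated prefactors $e^{\pi i2^{-2j}kl}e^{\pi ik(1+2^{-2j})\xi}e^{\pi ik(1-2^{-2j})\eta}$. The paper omits the proof of this lemma as routine, and your argument is precisely the one its structure implies, so nothing further is needed.
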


In \cite{RaS}, Radha and Adhikari proved the following
\begin{theorem}\label{T:twistransortho}
Let $\varphi\in L^2(\mathbb{R}^2)$. Then $\{T^t_{(k,l)}\varphi:k,l\in\mathbb{Z}\}$ is an orthonormal system in $L^2(\mathbb{R}^2)$ if and only if 
\begin{align*}
(i)&\sum\limits_{m\in\mathbb{Z}}\int\limits_{\mathbb{R}}|K_\varphi(\xi+m,\eta)|^2d\eta=1\quad\text{  a.e. }\xi\in[0,1]\quad\text{ and}\\
(ii)&\sum\limits_{m\in\mathbb{Z}}\int\limits_{\mathbb{R}}K_\varphi(\xi+m,\eta)\overline{K_\varphi(\xi+m+l,\eta)}d\eta=0\quad\text{ a.e. }\xi\in[0,1],\,\forall\,l\in\mathbb{Z}\setminus\{0\}.
\end{align*}
\end{theorem}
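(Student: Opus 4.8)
The plan is to turn the orthonormality relations into a statement about Fourier coefficients of certain periodized kernel products, using the isometry property (\ref{E:Wnf}) of the Weyl transform. \emph{First}, I would reduce the full orthonormality requirement to the single family of inner products $\langle\varphi,T^t_{(p,q)}\varphi\rangle$, $(p,q)\in\mathbb{Z}^2$, where $T^t_{(k,l)}=(T^t_{(k,l)})^1$ in the sense of Definition \ref{D:twis}. Since each twisted translation is a unitary operator on $L^2(\mathbb{R}^2)$, and a direct computation from the definition gives the composition law $T^t_{(k,l)}T^t_{(k',l')}=e^{\pi i(lk'-kl')}T^t_{(k+k',l+l')}$, one reads off $(T^t_{(k,l)})^{-1}=T^t_{(-k,-l)}$ and hence
\[
\langle T^t_{(k,l)}\varphi,T^t_{(k',l')}\varphi\rangle=e^{\pi i(kl'-lk')}\langle\varphi,T^t_{(k'-k,l'-l)}\varphi\rangle .
\]
Because the scalar in front is unimodular and equals $1$ when $(k,l)=(k',l')$, the system is orthonormal if and only if $\langle\varphi,T^t_{(p,q)}\varphi\rangle=\delta_{(p,q),(0,0)}$ for every $(p,q)\in\mathbb{Z}^2$.

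\emph{Next}, I would evaluate these inner products on the Weyl transform side. Applying (\ref{E:Kdef}) directly (equivalently, Lemma \ref{L:Ktwistrans}(ii) with $j=0$) yields $K_{T^t_{(p,q)}\varphi}(\xi,\eta)=e^{\pi ipq}e^{2\pi ip\xi}K_\varphi(\xi+q,\eta)$. Since $W_1$ is a multiple of an isometry onto the Hilbert--Schmidt class, the identity (\ref{E:Wnf}) with $n=1$ and $\lambda=1$ gives $\langle\varphi,T^t_{(p,q)}\varphi\rangle=\langle K_\varphi,K_{T^t_{(p,q)}\varphi}\rangle_{L^2(\mathbb{C})}$, so that
\[
\langle\varphi,T^t_{(p,q)}\varphi\rangle=e^{-\pi ipq}\int_{\mathbb{R}}\Big(\int_{\mathbb{R}}K_\varphi(\xi,\eta)\overline{K_\varphi(\xi+q,\eta)}\,d\eta\Big)e^{-2\pi ip\xi}\,d\xi .
\]
Writing $H_q(\xi)$ for the inner $\eta$-integral and periodizing the outer $\xi$-integral over $[0,1]$, this becomes $e^{-\pi ipq}\int_0^1\Phi_q(\xi)e^{-2\pi ip\xi}\,d\xi$, where $\Phi_q(\xi)=\sum_{m\in\mathbb{Z}}H_q(\xi+m)$ is exactly the $1$-periodic function whose values appear in conditions (i) and (ii).

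\emph{Finally}, the displayed quantity is, up to the unimodular factor $e^{-\pi ipq}$, the $p$-th Fourier coefficient of $\Phi_q$ on $[0,1]$. Hence the requirement $\langle\varphi,T^t_{(p,q)}\varphi\rangle=\delta_{(p,q),(0,0)}$ for all $(p,q)$ is equivalent, by uniqueness of Fourier coefficients and completeness of $\{e^{2\pi ip\xi}\}$ in $L^2[0,1]$, to $\Phi_0\equiv1$ and $\Phi_q\equiv0$ for every $q\neq0$, which are precisely conditions (i) and (ii); note that both directions of the theorem drop out at once since the whole argument is a chain of equivalences. The one genuinely technical point, and the step I would treat most carefully, is the periodization: I would first observe that $K_\varphi\in L^2(\mathbb{R}^2)$ (as $\varphi\in L^2$ makes $W_1(\varphi)$ Hilbert--Schmidt), so that by Cauchy--Schwarz $\int_{\mathbb{R}}|H_q(\xi)|\,d\xi\le\|K_\varphi\|_{L^2}^2<\infty$; this guarantees $\Phi_q\in L^1[0,1]$ and legitimizes interchanging $\sum_m$ with $\int_0^1$ and the application of the Fourier uniqueness theorem. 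Everything else is routine computation.
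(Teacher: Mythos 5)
Your proposal is correct, but a point of order first: the paper contains no proof of this theorem at all --- it is imported from \cite{RaS} (``In \cite{RaS}, Radha and Adhikari proved the following''), so there is no in-paper argument to compare yours against; I can only judge it on its own merits, and it holds up. The composition law $T^t_{(k,l)}T^t_{(k',l')}=e^{\pi i(lk'-kl')}T^t_{(k+k',l+l')}$ is exactly what Definition \ref{D:twis} gives, so unitarity does reduce orthonormality to $\langle\varphi,T^t_{(p,q)}\varphi\rangle=\delta_{(p,q),(0,0)}$; the kernel identity $K_{T^t_{(p,q)}\varphi}(\xi,\eta)=e^{\pi ipq}e^{2\pi ip\xi}K_\varphi(\xi+q,\eta)$ checks out against (\ref{E:Kdef}) and agrees with Lemma \ref{L:Ktwistrans}(ii) at $j=0$; the transfer to kernels is precisely (\ref{E:Wnf}) with $n=1$, $\lambda=1$; and your Cauchy--Schwarz bound $\int_{\mathbb{R}}\lvert H_q\rvert\le\lVert K_\varphi\rVert_{L^2}^2$ is the right justification for both the periodization and the final step. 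Two small remarks. First, in the reduction formula the unimodular factor should be $e^{\pi i(lk'-kl')}$ rather than $e^{\pi i(kl'-lk')}$ under the convention (used later in your own kernel computation) that the inner product is conjugate-linear in the second slot; this is harmless, since all you use is that the factor is unimodular and equals $1$ on the diagonal. Second, the decisive tool at the end is the uniqueness theorem for Fourier coefficients of $L^1[0,1]$ functions, not completeness of $\{e^{2\pi ip\xi}\}$ in $L^2[0,1]$: your $\Phi_q$ is a priori only integrable, and your own $L^1$ bound is what makes the correct ($L^1$) uniqueness theorem applicable --- you do cite it, so this is a matter of emphasis, not a gap. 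The route you take (Weyl kernel, then Fourier coefficients of the periodized kernel products) is visibly the mechanism that produces conditions (i) and (ii) in the exact form stated, so it is almost certainly the same argument as in the cited source.
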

We shall now state a similar result in the context of twisted wavelet system.
\begin{theorem}\label{T:orthoTwis}
Let $\varphi\in L^2(\mathbb{R}^2)$. Let $\xi\in[0,1]$. For $j_1,j_2,l_1,l_2\in\mathbb{Z},$ let 
\begin{equation}\label{E:Pdef}
\begin{split}
&P^\varphi_{j_1,j_2,l_1,l_2}(\xi)\\
&\qquad=\sum\limits_{m\in\mathbb{Z}}\int\limits_{\mathbb{R}}K_\varphi^{2^{-2j_1}}(2^{j_1+j_2}(\xi+m)+l_1,2^{j_1}\eta)
\overline{K_\varphi^{2^{-2j_2}}(2^{2j_2}(\xi+m)+l_2,2^{j_2}\eta)}d\eta.
\end{split}
\end{equation}
\begin{align}
\intertext{For $j,l,m\in\mathbb{Z},\, y\in\mathbb{R},$ let}
&S^{\varphi}_{j,l}(\xi,m,y)=K_{e((\frac{2^{-2j}-1}{2})l,0)\varphi}
\left(\frac{2(\xi+m-y)}{1+2^{-2j}}+l,\frac{2y}{1-2^{-2j}}\right).\label{E:Sdef}\\
\intertext{For $j,l\in\mathbb{Z}$, let}
&R^\varphi_{j,l}(\xi)=\sum\limits_{m\in\mathbb{Z}}\int\limits_{\mathbb{R}}|S^{\varphi}_{j,l}(\xi,m,y)|^2dy.\label{E:Rdef}\\
\intertext{For $j,l_1,l_2\in\mathbb{Z}$, let}
&Q^\varphi_{j,l_1,l_2}(\xi)=\sum\limits_{m\in\mathbb{Z}}\int\limits_{\mathbb{R}}S^{\varphi}_{j,l_1}(\xi,m,y)\overline{S^{\varphi}_{j,l_2}(\xi,m,y)}dy.\label{E:Qdef}
\end{align}
 Then the twisted wavelet system $\{\mathcal{D}_{2^j}(T^t_{(k,l)})^{2^{-2j}}\varphi:k,l,j\in\mathbb{Z}\}$ is orthonormal in $L^2(\mathbb{R}^2)$ if and only if
\begin{align*}
(i&)\,\sum\limits_{m\in\mathbb{Z}}\int\limits_{\mathbb{R}}|K_\varphi(\xi+m,\eta)|^2d\eta=1\quad\text{ a.e. }\xi\in[0,1],\\
(ii&)\,\sum\limits_{m\in\mathbb{Z}}\int\limits_{\mathbb{R}}K_\varphi(\xi+m,\eta)\overline{K_\varphi(\xi+m+l,\eta)}d\eta=0\quad\text{ a.e. }\xi\in[0,1],\,\forall\,l\in\mathbb{Z}\setminus\{0\}, \\
(iii&)\, P^\varphi_{j_1,j_2,l_1,l_2}(\xi)=0\quad\text{ a.e. }\xi\in[0,1],\,\text{ for }j_2>j_1,\, l_1,l_2\in\mathbb{Z},\\
(iv&)\, Q^\varphi_{j,l_1,l_2}(\xi)=0\quad\text{ a.e. }\xi\in[0,1],\,\text{ for }j\in\mathbb{Z}\setminus\{0\},\, l_1\neq l_2\text{ in }\mathbb{Z},\\
(v&)\, R^\varphi_{j,l}(\xi)=\tfrac{|1-2^{-4j}|}{4}\quad\text{ a.e. }\xi\in[0,1],\,\text{ for }j\in\mathbb{Z}\setminus\{0\},\, l\in\mathbb{Z}.
\end{align*}
\end{theorem}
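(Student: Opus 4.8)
The strategy is to pass from the functions to their Weyl transform kernels. Taking $n=1$ and $\lambda=1$ in (\ref{E:Wnf}), the Weyl transform is an isometry, so $\langle f,g\rangle_{L^2(\mathbb{R}^2)}=\langle K_f,K_g\rangle_{L^2(\mathbb{R}^2)}$ for all $f,g\in L^2(\mathbb{R}^2)$. Consequently the twisted wavelet system is orthonormal if and only if
\[\langle K_{\mathcal{D}_{2^{j_1}}(T^t_{(k_1,l_1)})^{2^{-2j_1}}\varphi},K_{\mathcal{D}_{2^{j_2}}(T^t_{(k_2,l_2)})^{2^{-2j_2}}\varphi}\rangle_{L^2(\mathbb{R}^2)}=\delta_{j_1,j_2}\delta_{k_1,k_2}\delta_{l_1,l_2}\]
for all $j_1,j_2,k_1,k_2,l_1,l_2\in\mathbb{Z}$, where $\delta_{\cdot,\cdot}$ is the Kronecker delta. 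I would evaluate the left-hand side using the kernel formulas of Lemma \ref{L:Ktwistrans} and split the analysis into three regimes: $j_1=j_2=0$, $j_1=j_2\neq0$, and $j_1\neq j_2$. In each regime the resulting integral is periodized over the first kernel variable into $[0,1]\times\mathbb{Z}$, after which the completeness of the exponentials $\{e^{2\pi in\xi}:n\in\mathbb{Z}\}$ on $[0,1]$ converts the family of scalar identities (indexed by $k_1,k_2$) into a single pointwise identity a.e. on $[0,1]$.

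For the equal-scale regime $j_1=j_2=:j$, the dilation $\mathcal{D}_{2^j}$ is unitary on $L^2(\mathbb{R}^2)$, so orthonormality within this scale is equivalent to that of $\{(T^t_{(k,l)})^{2^{-2j}}\varphi:k,l\in\mathbb{Z}\}$. When $j=0$ this is exactly the orthonormality of the twisted translates $\{T^t_{(k,l)}\varphi\}$, which by Theorem \ref{T:twistransortho} is equivalent to conditions (i) and (ii). When $j\neq0$, I would insert the kernel formula from Lemma \ref{L:Ktwistrans}(ii); the inner product then carries the two-variable phase $e^{\pi i(k_1-k_2)(1+2^{-2j})\xi}e^{\pi i(k_1-k_2)(1-2^{-2j})\eta}$ together with a constant phase $e^{\pi i2^{-2j}(k_1l_1-k_2l_2)}$. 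The key step is the linear change of variables $w=\tfrac12\big((1+2^{-2j})\xi+(1-2^{-2j})\eta\big)$, $y=\tfrac12(1-2^{-2j})\eta$, which collapses the two-variable phase into the single exponential $e^{2\pi i(k_1-k_2)w}$ and produces the Jacobian factor $\tfrac{4}{|1-2^{-4j}|}$; under this substitution the kernel $K_{e((\frac{2^{-2j}-1}{2})l,0)\varphi}(\xi+l,\eta)$ becomes exactly $S^\varphi_{j,l}(\xi,m,y)$ once one writes $w=\xi+m$ with $\xi\in[0,1]$, $m\in\mathbb{Z}$. Periodizing and expanding in the $w$-exponentials then forces, for $l_1=l_2=l$, the identity $R^\varphi_{j,l}(\xi)=\tfrac{|1-2^{-4j}|}{4}$ (condition (v)) and, for $l_1\neq l_2$, the identity $Q^\varphi_{j,l_1,l_2}(\xi)=0$ (condition (iv)); the constant phase is harmless since it is a nonzero factor for each fixed $k_1-k_2$.

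For the cross-scale regime I may assume $j_2>j_1$ by Hermitian symmetry of the inner product. Here I would use Lemma \ref{L:Ktwistrans}(iii) for both kernels, so that the inner product carries the phase $e^{2\pi i(2^{-j_1}k_1-2^{-j_2}k_2)\xi}$. The substitution $\xi\mapsto2^{j_2}\xi$ rescales the kernel arguments to $2^{j_1+j_2}\xi+l_1$ and $2^{2j_2}\xi+l_2$, and turns the phase frequency into $2^{j_2-j_1}k_1-k_2$, which is an integer precisely because $j_2>j_1$. Periodizing the first variable into $[0,1]\times\mathbb{Z}$ and integrating out $\eta$ recovers $P^\varphi_{j_1,j_2,l_1,l_2}(\xi)$ as the bracketed periodized sum. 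Since $2^{j_2-j_1}k_1-k_2$ runs over all of $\mathbb{Z}$ as $k_1,k_2$ vary, the vanishing of the inner product for all $k_1,k_2,l_1,l_2$ is equivalent to the vanishing of every Fourier coefficient of $P^\varphi_{j_1,j_2,l_1,l_2}$, that is, to condition (iii).

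Finally, since every reduction above is an equivalence — the Weyl isometry is exact, and the Fourier coefficients of an integrable periodic function determine it a.e. — the five conditions are simultaneously necessary and sufficient. I expect the main obstacle to be the equal-scale case $j\neq0$: the genuinely two-dimensional phase cannot be periodized directly (the factor $2^{-2j}$ is not an integer when $j>0$), and only the nonorthogonal change of variables above decouples it into a single torus frequency. Correctly tracking the resulting Jacobian $\tfrac{|1-2^{-4j}|}{4}$ and verifying that it is precisely the kernel $S^\varphi_{j,l}$ which emerges — rather than some other rearrangement — is the delicate bookkeeping on which conditions (iv) and (v) rest.
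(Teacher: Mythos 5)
Your proposal is correct and follows essentially the same route that the paper's machinery is built for: the Weyl-transform isometry, the kernel formulas of Lemma \ref{L:Ktwistrans}, the case split into $j_1=j_2=0$ (handled by Theorem \ref{T:twistransortho}), $j_1=j_2\neq 0$, and $j_2>j_1$, followed by periodization and comparison of Fourier coefficients on $[0,1]$. In particular, your nonorthogonal substitution $w=\tfrac12\bigl((1+2^{-2j})\xi+(1-2^{-2j})\eta\bigr)$, $y=\tfrac12(1-2^{-2j})\eta$, with Jacobian factor $\tfrac{4}{|1-2^{-4j}|}$, is exactly the change of variables encoded in the definition (\ref{E:Sdef}) of $S^\varphi_{j,l}$ and in the constant appearing in condition (v), so your reconstruction matches the intended proof.
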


\bibliographystyle{amsplain}
\bibliography{references}

\end{document}